\documentclass[12pt]{article}
\usepackage{amsfonts,amsthm,amsmath,amssymb}
\usepackage{cite,microtype}

\usepackage[usenames]{color}
\usepackage[colorlinks=true,
linkcolor=webgreen,
filecolor=webbrown,
citecolor=webgreen]{hyperref}

\definecolor{webgreen}{rgb}{0,.5,0}
\definecolor{webbrown}{rgb}{.6,0,0}

\newtheorem{theorem}{Theorem}
\newtheorem{corollary}{Corollary}

\setlength{\textwidth}{6.5in}
\setlength{\oddsidemargin}{.1in}
\setlength{\evensidemargin}{.1in}
\setlength{\topmargin}{-.1in}
\setlength{\textheight}{8.4in}

\newcommand{\seqnum}[1]{\href{https://oeis.org/#1}{\rm \underline{#1}}}

\title{\Large\bf Egyptian Fractions with Denominators from\\ Sequences Closed Under Doubling}
\author{\large David Eppstein\\
Department of Computer Science\\
University of California, Irvine\\
Irvine, CA 92697\\
USA\\
\href{mailto:eppstein@uci.edu}{eppstein@uci.edu}}

\date{ }

\begin{document}
\maketitle  

\begin{abstract}
Resolving a conjecture of Zhi-Wei Sun, we prove that every rational number can be represented as a sum of distinct unit fractions whose denominators are practical numbers. The same method applies to allowed denominators that are closed under multiplication by two and include a multiple of every positive integer, including the odious numbers, evil numbers, Hardy--Ramanujan numbers, Jordan--P\'olya numbers, and fibbinary numbers.
\end{abstract}

\section{Introduction}

An \emph{Egyptian fraction} represents a rational number as a sum of distinct unit fractions; we allow $1/1$ as a unit fraction, but no larger integers. Several past works have studied Egyptian fractions with restricted denominators. In the mid-1950s, in connection with the still-open question of the termination of a greedy algorithm for finding representations with odd denominators, Breusch and Stewart showed that all rational numbers with odd denominators have Egyptian fractions with odd denominators~\cite{Bre-AMM-54,Ste-AJM-54}. One of Graham's first publications~\cite{Gra-PJM-64} Egyptian showed that Egyptian fractions with square denominators exist for all rationals in the intervals $[0,\pi^2/6-1)$ and $[1,\pi^2/6)$, and characterized Egyptian fractions with $k$th-power denominators for $k>2$. The last posthumous publication of Erd\H{o}s~\cite{ButErdGra-Int-15} showed that all natural numbers have Egyptian fraction representations whose denominators are the products of three prime numbers.

We find two properties of integer sequences that allow all rational numbers to be represented as Egyptian fractions with denominators from the sequence, or all rational numbers up to the natural limit of such representations, the sum of reciprocals of sequence elements.
\begin{itemize}
\item Sequence $S$ is \emph{closed under doubling} (or \emph{doubling-closed}) when, for all $x\in S$, $2x$ is also a member of $S$. That is, doubling a sequence element produces another sequence element.
\item Sequence $S$ is \emph{productive} when, for all $x\in\mathbb{Z}^+$, there exists $y\in\mathbb{Z}^+$ such that $xy$ is a member of $S$. That is, every integer has a multiple that is in the sequence.
\end{itemize}
Well-known sequences with both properties include the practical numbers, odious and evil numbers, Hardy--Ramanujan numbers, Jordan--P\'olya numbers, and fibbinary numbers (see \autoref{sec:sequences} for details on these sequences), allowing us to find Egyptian fractions using their elements as denominators. Our results positively resolve a conjecture of Sun according to which all rational numbers have Egyptian fractions with practical denominators~\cite{Sun-15}. Two other conjectures of Sun on Egyptian fractions whose denominators are the primes minus one or the primes plus one remain open; the sequences in those conjectures are not closed under doubling.

Our construction method multiplies the numerator of a fraction by a power of two, divides by the denominator, and separately finds the binary representations of the quotient and remainder. This method was used by Stewart to find Egyptian fractions with all denominators even~\cite{Ste-AJM-54} and called the ``binary remainder method'' in our earlier survey on Egyptian fraction construction algorithms~\cite{Epp-MER-95}.

\section{Doubling-closed and productive sequences}
\label{sec:sequences}

We briefly survey some notable integer sequences that are closed under doubling and productive. For these sequences, we also consider the convergence or divergence of the sum
\[ \sum_{x\in S} \frac{1}{x} \]
of reciprocals of elements of a sequence $S$, as this sum (when finite) forms a natural upper bound on Egyptian fractions with denominators in $S$. Some of these sequences are subsequences of others, but for representation by Egyptian fractions this is not redundant, because they will represent different ranges of rational numbers.

\begin{itemize}
\item The practical numbers (\seqnum{A005153}) are the numbers $n$ such that all integers $m\le n$ may be represented as sums of distinct divisors of $n$. For $n$ in this sequence, all rationals $m/n\in(0,1)$ have Egyptian fraction representations with denominators that are divisors of $n$. Several tables of Egyptian fraction expansions based on this principle were given by Fibonacci~\cite{Sig-FLA-02}, but the first explicit definition of the practical numbers was by Srinivasan in 1948~\cite{Sri-CS-48}.

A positive integer $n$ is practical if and only if, for each prime $p$ that divides $n$, the sum of divisors of the $(<p)$-smooth part of $n$ is at least $p-1$~\cite{Ste-AJM-54,Sie-AMPA-55}. This property is clearly preserved on multiplication by two, and can be obtained from any positive integer $n$ by multiplying it by a power of two bigger than all of its prime divisors. Therefore, the practical numbers are doubling-closed and productive. They have logarithmic asymptotic density~\cite{Wei-QJM-15}, like the prime numbers, so their sum of reciprocals diverges.

\item The odious numbers (\seqnum{A000069}) have binary representations with an odd number of nonzero bits, and the evil numbers (\seqnum{A001969}) have binary representations with an even number of nonzeros. Although named by Berlekamp, Conway, and Guy~\cite{BerConGuy-82}, their study goes back at least to Prouhet in 1851~\cite{Wri-AMM-59}. Doubling an odious or evil number shifts its binary representation without changing its parity. For any $n$ and sufficiently large $k$, $n(2^k+1)$ is evil, with a binary representation formed by two copies of the binary representation of $n$, separated by zeros, so the evil numbers are productive. The smallest multiplier whose product with $n$ is odious (\seqnum{A178757}) was proven to exist by Morgenbesser, Shallit, and Stoll~\cite{MorShaSto-JNT-11}. Because the odious and evil numbers have density $\tfrac12$ in the integers, the sums of their reciprocals diverge.

\item The Hardy--Ramanujan numbers (\seqnum{A025487}) have prime factorizations $2^{e_2} 3^{e_3} 5^{e_5}\cdots$ with $e_2\ge e_3\ge e_5\ge\cdots$. This ordering of exponents is preserved by doubling. A Hardy--Ramanujan multiple of any $n$ can be obtained from the prime factorization of $n$ by replacing each exponent with its maximum with all later exponents. Hardy and Ramanujan~\cite{HarRam-PLMS-16} proved that the set of these numbers below any given threshold $N$ has cardinality exponential in $\sqrt{\log N/\log\log N}$,  sufficiently sparse that their sum of reciprocals converges to a finite bound.

\item The Jordan--P\'olya numbers (\seqnum{A001013}) are the orders of automorphism groups of trees, studied by Jordan and P\'olya~\cite{Jor-Crelle-69,Pol-AM-37}, and are equivalently the products of factorials. Because 2 is a factorial, they are preserved by doubling. Each $n$ has a multiple in this sequence, for instance $n!$.  Their sum of reciprocals converges.

\item The fibbinary numbers (\seqnum{A003714}) have binary representations with no two consecutive nonzero bits. Doubling a fibbinary number shifts its binary representation without introducing new consecutive nonzeros. Multipliers for each $n$ showing that \seqnum{A003714} is productive, and an argument that these multipliers exist for all $n$, are given in \seqnum{A300867}. \seqnum{A003714} is so-named because the number of them with a given number of bits is a Fibonacci number. This implies that, up to any threshold $N$, there are $O(N^{\log_2\varphi})\approx N^{0.694242}$ fibbinary numbers, where $\varphi$ is the golden ratio, few enough that their sum of reciprocals converges.

\item The Moser--de Bruijn sequence (\seqnum{A000695}) consists of sums of distinct powers of four, and is not closed under doubling. Modifying it by including both the numbers of this form, and the doubles of numbers of this form, produces \seqnum{A126684}, which is closed under doubling. The existence in \seqnum{A126684} of a multiple of any $n$ follows from the same argument as the fibbinary numbers: the numbers of the form $(4^i-1)/3$, having binary representations that alternate between 0's and 1's, have by the pigeonhole principle two elements that are congruent modulo $n$, and the difference of these two elements is a multiple of $n$ in \seqnum{A126684}. \seqnum{A126684} grows quadratically, and is the fastest-growing sequence whose sums of pairs of elements include all positive integers. Its quadratic growth implies that its sum of reciprocals converges.

\item \seqnum{A116882} consists of the positive products $2^k\cdot\ell$ for which $2^k\ge\ell$, and is clearly closed under doubling. For every $n$, it contains the multiples $2^kn$ of $n$, for all $k$ large enough that $2^k\ge n$. When the numbers in \seqnum{A116882} up to some threshold $N$ are factored into a power of two and an odd part, there are $O(\log N)$ choices for the power of two and $O(\sqrt N)$ choices for the odd part, giving $O(\sqrt N\log N)$ total choices, few enough that the sum of reciprocals converges.
\end{itemize}

\section{Existence of representations}

\begin{theorem}
Let $S$ be a doubling-closed and productive set of positive integers, and let $q$ be a rational number with $0<q<\sum_{s\in S}1/s$.
Then $q$ has a representation as an Egyptian fraction with denominators in $S$.
\end{theorem}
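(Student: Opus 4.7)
The plan is to apply the ``binary remainder method'' promised in the introduction. Given $q=p/b$, I would first invoke productivity to pick $m\in S$ with $b\mid m$, which rewrites $q=P/m$ with $P=pm/b$ a positive integer. By doubling-closure, the entire progression $m,2m,4m,\ldots$ lies in $S$, so the core subroutine we may use is that for any positive integer $A<2^{k+1}$ the binary expansion $A=\sum_j 2^{a_j}$ gives
\[
\frac{A}{2^k m}=\sum_j\frac{1}{2^{k-a_j}m},
\]
an Egyptian-fraction representation whose denominators lie in $S$ and are pairwise distinct.

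The core manipulation is then to fix an exponent $k\ge\lceil\log_2 m\rceil$ and carry out the integer division $2^k P=Qm+R$ with $0\le R<m$, producing the decomposition $q=Q/2^k+R/(2^k m)$. The remainder piece $R/(2^k m)$ is handled immediately by the subroutine above, since $R<m\le 2^k$ forces every bit of $R$ to occur at a position strictly below $k$ and the resulting denominators $2^{k-r_j}m$ all lie in $S$. For the quotient piece $Q/2^k$, whose naive binary denominators are pure powers of two and so need not belong to $S$, the plan is to reroute through $S$ once more: use productivity to choose a fresh $m'\in S$ that is a multiple of $2^k$ and strictly exceeds every denominator already produced, rewrite $Q/2^k=Q'/m'$ with $Q'=Qm'/2^k$, and apply the construction recursively. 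Choosing $m'$ larger than all earlier denominators automatically keeps successive batches of denominators disjoint.

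The main obstacle will be to show that this recursion terminates after finitely many stages. The hypothesis $q<\sum_{s\in S}1/s$ must enter precisely here: it guarantees a sufficient supply of ``fresh'' elements of $S$ to continue the construction, and a careful tuning of the exponent $k$ at each stage---presumably via a $2$-adic valuation argument that keeps the remainder $R$ nonzero and strictly shrinks the leftover quotient---should drive the process to a terminating unit-fraction tail after finitely many passes. Verifying that termination, together with the bookkeeping for distinctness across all stages, is where I expect the real technical work to lie.
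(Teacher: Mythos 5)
You have identified the right general method, but your division is set up so that the quotient lands on the wrong denominator, and the recursion you introduce to repair this is a genuine gap, not deferred bookkeeping: each pass converts an instance ``positive integer over an element of $S$'' into another instance of exactly the same shape, with no decreasing measure, so as described it does not terminate. The idea you are missing is to arrange the division so that \emph{both} the quotient and the remainder land on denominators obtained by doubling elements of $S$, eliminating the recursion entirely. Writing the quantity to be represented as $x/y$ with $my\in S$ (where $m=\min S$; productivity supplies such a $y$), setting $k=\lfloor\log_2 y\rfloor$, and dividing $2^kmx$ by $y$ --- rather than dividing $2^kP$ by $m$ --- gives $2^kmx=ay+b$ and hence
\[
\frac{x}{y}=\frac{ay+b}{2^kmy}=\frac{a}{2^km}+\frac{b}{2^kmy},
\]
so the binary expansions of $a$ and $b$ produce denominators $2^{k-a_i}m$ and $2^{k-b_j}my$, all reachable from $m$ or $my$ by repeated doubling and therefore all in $S$, in a single pass.

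Two further ingredients are needed to make that single pass work, and both are absent from your sketch. First, the bits of the quotient $a$ must sit in positions at most $k$, which requires $a<y$, equivalently $x/y<1/m$; the paper ensures this by first greedily subtracting the reciprocals of an initial segment $P$ of $S$ until what remains is smaller than the reciprocal of the next unused element. This greedy step is where the hypothesis $q<\sum_{s\in S}1/s$ actually enters (it guarantees the greedy prefix is a finite, proper initial segment, and also that the new denominators are too large to collide with those in $P$) --- not, as you speculate, in supplying fresh elements for a recursion. Second, the two batches $2^{k-a_i}m$ and $2^{k-b_j}my$ must not collide with each other, which is arranged by forcing $y$ to have a nontrivial odd factor (multiplying numerator and denominator by $3$ when the reduced denominator is a power of two). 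Your construction has the analogous unaddressed collision between the denominators $2^{k-j}$ and $2^{k-r_j}m$ when $m$ is a power of two, though that is minor next to the termination gap.
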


\begin{proof}
Let $m=\min S$. Let $P$ be a prefix (that is, an initial subsequence) of the ascending sorted order of $S$, the longest prefix for which $q\ge\sum_{p\in P} 1/p$. Let $q'=q- \sum_{p\in P} 1/p$ be the remaining fraction to represent after choosing unit fractions with denominators in $P$. We find a representation $q'=x/y$ of $q$ as a fraction (not necessarily in lowest terms), where $y$ has a nontrivial odd factor and where $my\in S$. To do so, let $x'/y'$ be the lowest-terms representation of $q'$, multiply both $x'$ and $y'$ by three if $y'$ is a power of two, find a multiple $rmy'$ of $my'$ that belongs to $S$, and let $x=rx'$ and $y=ry'$.

Let $k=\lfloor\log_2 y\rfloor$, and divide $2^kmx$ by $y$ giving $2^kmx = ay + b$. Then $x/y<1/m$ (else we would have included more elements in $P$), and $2^k<y$, so $2^kmx<y^2$ and $a<y$. As with any remainder of division we have also $b<y$. Let $A=\{a_1,a_2,\dots\}$ and $B=\{b_1,b_2,\dots\}$ be sets of integers with $a=\sum_i 2^{a_i}$ and $b=\sum_j 2^{b_j}$, the sets of exponents in the binary representations of $a$ and $b$. By our choice of $k$, and because both $a$ and $b$ are less than $y$, each of the exponents in $A$ and $B$ is at most $k$.

Represent $q$ as a sum of unit fractions, by expanding
\begin{align*}
q &= \Bigl(\sum_{p\in P}\frac{1}{p}\Bigr) + \frac{x}{y} \\
&= \Bigl(\sum_{p\in P}\frac{1}{p}\Bigr) + \frac{2^kmx}{2^kmy} \\
&= \Bigl(\sum_{p\in P}\frac{1}{p}\Bigr) + \frac{ay+b}{2^kmy} \\
&= \Bigl(\sum_{p\in P}\frac{1}{p}\Bigr) +
   \Bigl(\sum_{a_i\in A} \frac{1}{2^{k-a_i}m}\Bigr) +
  \Bigl( \sum_{b_j\in B} \frac{1}{2^{k-b_i}my}\Bigr).
\end{align*}

All denominators in this representation are members of $P$, or can be obtained by repeatedly doubling $m$ or $my$, so all belong to $S$. Within each of the three summations of this representation, the unit fractions come from distinct elements of a set $P$, $A$, or $B$, and are therefore distinct from each other. The denominators of unit fractions coming from $P$ cannot coincide with any other denominators, because if they did we would have been able to include more elements in $P$. No denominators in the second summation can equal a denominator in the third summation, because of the nontrivial odd factor of $y$. Therefore, we have represented $q$ as a sum of distinct unit fractions all of whose denominators belong to $S$.
\end{proof}

\begin{corollary}
Every positive rational number has an Egyptian fraction representation with practical denominators, an Egyptian fraction representation with odious denominators, and an Egyptian fraction representation with evil denominators.
\end{corollary}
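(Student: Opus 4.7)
The plan is to reduce the corollary to a direct application of the theorem for each of the three named sequences. The theorem requires three hypotheses on $S$ and $q$: that $S$ is doubling-closed, that $S$ is productive, and that $0<q<\sum_{s\in S}1/s$. Since $q$ is given as an arbitrary positive rational, the third hypothesis will be automatic whenever the reciprocal sum of $S$ diverges.

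First, I would record that \autoref{sec:sequences} has already verified doubling-closure and productivity for each of the three sequences in question. For the practical numbers, closure under doubling follows from the Stewart--Sierpi\'nski characterization via sums of divisors of the $(<p)$-smooth part, and productivity follows by multiplying any $n$ by a power of two exceeding its largest prime factor. For the odious and evil numbers, closure under doubling is immediate from the fact that doubling appends a zero to the binary expansion without changing the parity of the number of ones, and productivity was noted using the multiplier $2^k+1$ for the evil numbers and the Morgenbesser--Shallit--Stoll result for the odious numbers.

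Second, I would observe that all three sequences have divergent reciprocal sums: the practical numbers have logarithmic asymptotic density by the result of Weingartner cited earlier, and the odious and evil numbers each have natural density $1/2$. Hence for any positive rational $q$, the strict inequality $q<\sum_{s\in S}1/s$ holds trivially, so the last hypothesis of the theorem is satisfied for each of the three choices $S\in\{\text{practical},\text{odious},\text{evil}\}$.

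With all hypotheses in place, applying the theorem three times, once per sequence, produces the three claimed Egyptian fraction representations of $q$. There is no real obstacle here; the corollary is essentially a bookkeeping consequence of the theorem together with the sequence-by-sequence facts already assembled in \autoref{sec:sequences}.
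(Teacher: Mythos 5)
Your proposal is correct and matches the paper's intent exactly: the corollary is stated without a separate proof precisely because it follows by applying the theorem to each of the three sequences, whose doubling-closure, productivity, and divergent reciprocal sums were already established in Section 2. Nothing further is needed.
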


\begin{corollary}
For each of \seqnum{A025487}, \seqnum{A001013}, \seqnum{A003714}, \seqnum{A126684}, and \seqnum{A116882},
every positive rational number that is less than the sum of reciprocals of the sequence has an Egyptian fraction representation in which all denominators belong to the sequence.
\end{corollary}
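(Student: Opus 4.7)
The plan is to derive this corollary directly from the theorem by verifying, for each of the five listed sequences, the two hypotheses of the theorem: namely, that the sequence is doubling-closed and productive. Once both properties are confirmed, the theorem applies verbatim and gives, for any positive rational $q$ strictly less than the sum of reciprocals of the sequence, an Egyptian fraction representation with denominators drawn from that sequence.

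The verification of the two properties for each sequence has already been carried out in \autoref{sec:sequences}, so the proof reduces to citing those arguments. Concretely, I would run through the list: for \seqnum{A025487}, doubling preserves the monotone decreasing exponent pattern $e_2\ge e_3\ge e_5\ge\cdots$, and the multiple obtained by replacing each exponent with the maximum of itself and all later exponents witnesses productivity; for \seqnum{A001013}, doubling is multiplication by $2=2!$, and $n!$ is always a Jordan--P\'olya multiple of $n$; for \seqnum{A003714}, doubling is a binary left-shift which cannot create adjacent $1$-bits, and productivity is recorded in \seqnum{A300867}; for \seqnum{A126684}, doubling is built into its definition, and productivity follows from the pigeonhole argument on the numbers $(4^i-1)/3$; and for \seqnum{A116882}, doubling preserves the inequality $2^k\ge\ell$, while $2^k n$ lies in the sequence for every $k$ with $2^k\ge n$.

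With both hypotheses established, the theorem delivers the Egyptian fraction representation for every $q$ in the stated range, which is exactly the claim of the corollary.

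There is no real obstacle here; the work was done in the preceding section and in the theorem. The only point that requires mild care is the stipulation that $q$ be \emph{strictly} less than $\sum_{s\in S} 1/s$, which matches the strict inequality in the theorem and is why the corollary is phrased with ``less than'' rather than ``at most''; for the three sequences in the previous corollary this sum diverges and so the restriction is vacuous, whereas for the five sequences here the sum converges (as noted in \autoref{sec:sequences}) and the bound is a genuine constraint that cannot be removed.
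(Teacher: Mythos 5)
Your proof is correct and matches the paper's intent exactly: the corollary is an immediate application of the theorem once doubling-closure and productivity are checked for each of the five sequences, and those checks are precisely the ones recorded in \autoref{sec:sequences}. Your closing remark about why the strict upper bound is a genuine constraint here (convergent reciprocal sums) but vacuous for the first corollary is also consistent with the paper.
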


\bibliographystyle{jis}
\bibliography{egypt}

\begin{thebibliography}{10}

\bibitem{BerConGuy-82}
E.~R. Berlekamp, J.~H. Conway, and R.~K. Guy, {Odious and evil numbers}.
\newblock In {\em Winning Ways for your Mathematical Plays, Volume 1: Games in
  General}, p.  431. Academic Press, 1982.

\bibitem{Bre-AMM-54}
R.~Breusch, {A special case of Egyptian fractions, solution to advanced problem
  4512}, {\em Amer. Math. Monthly} {\bf 61} (1954), 200{--}201.

\bibitem{ButErdGra-Int-15}
S.~Butler, P.~Erd{\H{o}}s, and R.~L. Graham, {Egyptian fractions with each
  denominator having three distinct prime divisors}, {\em Integers} {\bf 15}
  (2015), A51:1{--}A51:9.

\bibitem{Epp-MER-95}
D.~Eppstein, {Ten algorithms for Egyptian fractions}, {\em Mathematica in
  Education and Research} {\bf 4}(2) (1995), 5{--}15.

\bibitem{Gra-PJM-64}
R.~L. Graham, {On finite sums of reciprocals of distinct $n$th powers}, {\em
  Pacific J. Math.} {\bf 14} (1964), 85{--}92.

\bibitem{HarRam-PLMS-16}
G.~H. Hardy and S.~Ramanujan, {Asymptotic formulae for the distribution of
  integers of various types}, {\em Proc. London Math. Soc.} {\bf 16} (1916),
  112{--}132.

\bibitem{Jor-Crelle-69}
C.~Jordan, {Sur les assemblages de lignes}, {\em J. Reine Angew. Math.} {\bf
  1869}(70) (1869), 185{--}190.

\bibitem{MorShaSto-JNT-11}
J.~F. Morgenbesser, J.~Shallit, and T.~Stoll, {Thue-Morse at multiples of an
  integer}, {\em J. Number Theory} {\bf 131}(8) (2011), 1498{--}1512.

\bibitem{Pol-AM-37}
G.~P{\'o}lya, {Kombinatorische Anzahlbestimmungen f{\"u}r Gruppen, Graphen und
  chemische Verbindungen}, {\em Acta Math.} {\bf 68} (1937), 145{--}254.

\bibitem{Sie-AMPA-55}
W.~Sierpi{\'n}ski, {Sur une propri{\'e}t{\'e} des nombres naturels}, {\em Ann.
  Mat. Pura Appl.} {\bf 39}(1) (1955), 69{--}74.

\bibitem{Sig-FLA-02}
L.~E. Sigler, {\em {Fibonacci's Liber Abaci}}, Springer-Verlag, 2002.

\bibitem{Sri-CS-48}
A.~K. Srinivasan, {Practical numbers}, {\em Current Science} {\bf 17} (1948),
  179{--}180.

\bibitem{Ste-AJM-54}
B.~M. Stewart, {Sums of distinct divisors}, {\em Amer. J. Math.} {\bf 76}(4)
  (1954), 779{--}785.

\bibitem{Sun-15}
Z.-W. Sun, {A conjecture on unit fractions involving primes}.
\newblock Unpublished manuscript, linked from OEIS A005153, November 6 2015.

\bibitem{Wei-QJM-15}
A.~Weingartner, {Practical numbers and the distribution of divisors}, {\em Q.
  J. Math.} {\bf 66}(2) (2015), 743{--}758.

\bibitem{Wri-AMM-59}
E.~M. Wright, {Prouhet's 1851 solution of the Tarry-Escott problem of 1910},
  {\em Amer. Math. Monthly} {\bf 66} (1959), 199{--}201.

\end{thebibliography}

\bigskip
\hrule
\bigskip

\noindent 
2010 \emph{Mathematics Subject Classification}:~Primary 11A67. 
Secondary 11A63, 11B83.

\medskip

\noindent 
\emph{Keywords}:~Egyptian fraction,
practical number,
odious number,
evil number,
Hardy--Ramanujan number,
Jordan--P\'olya number,
fibbinary number. 

\bigskip
\hrule
\bigskip

\noindent
(Concerned with sequences 
\seqnum{A000069},
\seqnum{A000695},
\seqnum{A001013},
\seqnum{A001969},
\seqnum{A003714},
\seqnum{A005153},
\seqnum{A025487},
\seqnum{A116882},
\seqnum{A126684},
\seqnum{A178757},
\seqnum{A300867}.)

\end{document}